\newif\ifdraft
\DeclareFontFamily{OMX}{MnSymbolE}{}
\DeclareFontShape{OMX}{MnSymbolE}{m}{n}{
    <-6>  MnSymbolE5
   <6-7>  MnSymbolE6
   <7-8>  MnSymbolE7
   <8-9>  MnSymbolE8
   <9-10> MnSymbolE9
  <10-12> MnSymbolE10
  <12->   MnSymbolE12}{}
\DeclareSymbolFont{mnlargesymbols}{OMX}{MnSymbolE}{m}{n}
\DeclareMathDelimiter{\llangle}{\mathopen}{mnlargesymbols}{'164}{mnlargesymbols}{'164}
\DeclareMathDelimiter{\rrangle}{\mathclose}{mnlargesymbols}{'171}{mnlargesymbols}{'171}
\declaretheorem[numberwithin=section]{theorem}
\title{Linear Adjusting Programming in Factor Space}
\author{Jing He\textsuperscript{\rm 1}\thanks{Swinburne University of Technology, Melbourne, 3122.Australia.Email:jinghe@swin.edu.au} Qi-Wei Kong\textsuperscript{\rm 2},  Ho-Chung Lui\textsuperscript{\rm 3}, Hai-Tao Liu\textsuperscript{\rm 3} \\  Yi-Mu Ji\textsuperscript{\rm 4} Hai-Chang Yao\textsuperscript{\rm 4}, Mo-Zhengfu Liu\textsuperscript}
\affil[1]{School of Software and Electrical Engineering, Swinburne University of Technology, Hawthorn, Australia}
\affil[2]{Institute of Information Engineering, Nanjing University of Finance and Economics, Nanjing, China}
\affil[3]{Research Center on Fictitious economy \& Data Science, Chinese Academy of Sciences,  Beijing, China}
\affil[4]{School of Computer Science, Nanjing University of Posts and Telecommunications, Nanjing, China}
\date{}
\begin{document}
\maketitle
\begin{abstract}
The definition of factor space and a unified optimization based classification model were developed for linear programming and supervised learning. Intelligent behaviour appeared in a decision process can be treated as a moving point \textit{y}, the dynamic state observed and controlled by the agent, moving in a factor space impelled by the goal factor and blocked by the constraint factors. Suppose that the feasible region is cut by a group of hyperplanes, when point \textit{y} reaches the region’s boundary, a hyperplane will block the moving and the agent needs to adjust the moving direction such that the target is pursued as faithful as possible. Since the boundary is not able to be represented to a differentiable function, the gradient method cannot be applied to describe the adjusting process. We, therefore, suggest a new model, named linear adjusting programming (LAP) in this paper. LAP is similar as a kind of relaxed linear programming (LP), and the difference between LP and LAP is: the former aims to find out the ultimate optimal point, while the latter just does a direct action in short period. You may ask: Where will a blocker encounter? How can the moving direction be adjusted? Where further blockers may be encountered next, and how should the direction be adjusted again? If the ultimate best is found, it is a blessing; if not, it is fine to find a local optimal solution. We request at least an adjusting should be achieved at the first time. what are the former and latter? possible to be more exact? In place of gradient vector, the projection of goal direction \textit{g} in a subspace plays a core role in linear adjusting programming. If a hyperplane blocks \textit{y} going ahead along with the direction  \textit{d}, then we must adjust the new direction \textit{d}’ as the projection of \textit{g} in the blocking plane. If there is only one blocker at a time, it is straightforward to calculate the projection, but how to calculate the projection when there are more than one blocker encountered simultaneously? It is an open problem for LP researchers still (M. Hassan, M. Rehmani, and J. Chen 2019)\cite{Chen2019} ( P. Wang. 2020)\cite{Wang2020}. We suggest a projection calculation by means of the Hat matrix in this paper. Linear adjusting programming will attract interest in economic restructuring, financial prediction, and reinforcement learning. It might bring a new light to solve the linear programming problem with a strong polynomial solution.
\end{abstract}

\section{Introduction} \label{sec:introduction}

Factor space is a mathematical theory proposed in 1982 (P. Z. Wang, M. Sugeno 1982)\cite{Wang1982}, which provides a unified mathematical framework for artificial intelligence and data science. All classifications and decisions are made in factor space. Based on that, a unified classification model was refined in 2020. An intelligent decision process can be described as a point \emph{y}, the dynamic state of an agent (or, observed and controlled by the agent). The agent's target is represented by a vector \emph{g}, which expels the movement of the point \emph{y}. In practice, the vector \emph{g} can be the economic restructuring or management policy. Apart from the target, there are hyperplanes present the contraint conditions for \emph{y}; each one cuts off half of the space, and they form a feasible convex region. The wall of a feasible region will block the moving trace when \emph{y} reaches the wall, and the agent needs to adjust its goal direction of \emph{y}. If a hyperplane $\alpha:(\tau, \emph{y}) =\emph{c}$ blocks  \    \emph{y}, then we must adjust the new direction\ \emph{d}' as the projection of \ \emph{g}:
\renewcommand{\theequation}{\thesection.\arabic{equation}}
\setcounter{equation}{0}
\begin{equation}
\emph{d'} = \emph{g}\;{\downarrow _\alpha} = \;\textit{g} - \tau* (\emph{g},\tau)/(\tau,\tau)
\label{AAA}
\end{equation}

But how do you calculate the projection when there are more than one blocker encountered in the same stage point? It is still an open problem for LP researchers (X. Cai.2020)\cite{Cai2020} (Y. Wang. 2019)\cite{YWang2019}.

Since there is no differentiable function to represent the wall, the classical gradient method cannot be applied here, and a new model is proposed in next section. It will be of great significance to economic restructuring, financial prediction, machine learning and reinforcement learning.

Authors were engaged in the research of linear programming. Simplex presented by Dantzig (G. B. Dantzig 2002)\cite{Dantzig2002} is a piece of perfect mathematical art; the only defect is that may rotate along edges. But how can the optimization be fastened? We need geometric description for simplex. The cone-cutting theory was proposed in 2011(P. Z. Wang 2011)\cite{Wang2011}, which  intuitively shows that a pivoting performed in the standard simplex tableau is taking a cone-cutting in the dual space. Utilizing the idea of cone-cutting, an algorithm was put forward in 2014 (P. Z. Wang 2014)\cite{Wang2014}, named Gradient Falling, which searching the minimum point as a body falling by gravity in dual space. The critical problem is how to calculate the projection of a vector in subspace, and a projection calculation algorithm was given by the author himself. In 2017, Lui improved the algorithm by citing null projection method from Matlab, renamed Gravity Sliding algorithm (P. Z. Wang 2017)\cite{Wang2017}.

Inspired by Prof. Peizhang Wang’s original ideas, 
this paper will employee the Hat projection from statistical learning, it is easier and clearer than Wang's projection calculation in grading falling algorithm and the null projection in Lui's gravity sliding algorithm. In this paper, we will use the Hat matrix to calculate multiple projections, which is more precise than the methods mentioned before.

We introduce the Cone-cutting Theory as well as the past research on Sliding Gradient algorithm in Section \ref{sec:The Cone-cutting Theory and GSA}; We propose the linear adjusting programming in Section \ref{sec:Linear Adjusting Programming}; The Hat  projection is introduced in Section \ref{sec:Hat Projection}; The algorithm of linear adjusting programming and an example and the comparison with the Simplex, Episode, Interior Point method are given in Section \ref{sec:Algorithm of Linear Adjusting Programing}; Conclusions are put in Section \ref{sec:Conclusions}.

\section{Linear Programming} \label{sec:Linear Programming}

Linear programming (LP) (also known as linear optimization) is a technique to accomplish the best outcome (such as maximum revenue or lowest loss) in a mathematical model.

More formally, linear programming is a method for the optimization of a linear objective function, subject to linear equality and linear inequality constraints. Its feasible region is a convex polytope, which is a set defined as the intersection of finitely many half spaces, each of which is defined by a linear inequality (Dimitris Bertsimas, John N. Tsitsiklis 1997)\cite{Dimitris1997}. Its objective function is a real-valued affine (linear) function defined on this polyhedron. A linear programming algorithm finds a point in the polyhedron where this function has the smallest (or largest) value if such a point exists (Dimitris Bertsimas, John N. Tsitsiklis 1997)\cite{Dimitris1997}.

Linear programs are problems that can be expressed in canonical form as (Dimitris Bertsimas, John N. Tsitsiklis 1997)\cite{Dimitris1997}	
\renewcommand{\theequation}{\thesection.\arabic{equation}}
\setcounter{equation}{0}
\begin{equation}
\begin{aligned}
maximize\;\;\emph{c}^{\emph{T}} \emph{x} \\
\; subject\; to\;\;\emph{Ax}\leq \emph{b}  \\
\;and\;\;\emph{x} \geq 0 
\label{diseqn}
\end{aligned}
\end{equation}
where \emph{x} represents the vector of variables (to be determined), \emph{c} and \emph{b} are vectors of (known) coefficients, \emph{A} is a (known) matrix of coefficients, and $(\cdot)^{\emph{T}} $ is the matrix transpose. The expression to be maximized or minimized is called the objective function ($\emph{c}^{\emph{T}} \emph{x}$ in this case). The inequalities $\emph{Ax}\leq \emph{b}$ and $\emph{x} \geq 0$ are the constraints which specify a convex polytope over which the objective function is to be optimized. In this context, two vectors are comparable when they have the same dimensions. If every entry in the first is less-than or equal-to the corresponding entry in the second then we can say the first vector is less-than or equal-to the second vector.

Standard form is the usual and most intuitive form of describing a linear programming problem. It consists of the following three parts:

A linear function to be maximized \emph{e.g.}
\begin{equation}
\begin{aligned}
\emph{f}(\emph{x}_{1},\emph{x}_{2})=\emph{c}_{1}\emph{x}_{1}+\emph{c}_{2}x_{2}
\label{diseqn}
\end{aligned}
\end{equation}

Problem constraints of the following form \emph{e.g.}
\begin{equation}
\begin{aligned}
\emph{a}_{11}\emph{x}_{1}+\emph{a}_{12}\emph{x}_{2}\leq \emph{b}_{1}\\
\emph{a}_{21}\emph{x}_{1}+\emph{a}_{22}\emph{x}_{2}\leq \emph{b}_{2}\\
\emph{a}_{31}\emph{x}_{1}+\emph{a}_{32}\emph{x}_{2}\leq \emph{b}_{3}\\
\end{aligned}
\label{diseqn}
\end{equation}

Non-negative variables \emph{e.g.}
\begin{equation}
\begin{aligned}
\emph{x}_{1}\geq 0\\\emph{x}_{2}\geq 0
\end{aligned}
\label{diseqn}
\end{equation}

The problem is usually expressed in matrix form, and then becomes:
\begin{equation}
\begin{aligned}
maximize \;\{\emph{c} ^{\emph{T} }\emph{x} \;|\;\emph{A}\emph{x} \leq \emph{b} \land \emph{x} \geq 0\}
\end{aligned}
\label{diseqn}
\end{equation}

Other forms, such as minimization problems, problems with constraints on alternative forms, as well as problems involving negative variables can always be rewritten into an equivalent problem in standard form.
There are several open problems in the theory of linear programming, the solution of which would represent fundamental breakthroughs in mathematics and potentially major advances in our ability to solve large-scale linear programs.

Does LP admit a strongly polynomial-time algorithm?

Does LP admit a strongly polynomial-time algorithm to find a strictly complementary solution?

Does LP admit a polynomial-time algorithm in the real number (unit cost) model of computation?

These closely related set of problems have been cited by Stephen Smale as among the 18 greatest unsolved problems of the 21st century. In Smale's words, the third version of the problem is ``the main unsolved problem of linear programming theory". While algorithms exist to solve linear programming in weakly polynomial time, such as the ellipsoid methods and interior-point techniques, no algorithms have yet been found allowing strongly polynomial-time performance in the number of constraints and the number of variables. The development of such algorithms would be of great theoretical interest, and perhaps allow practical gains in solving large LPs as well.
Although the Hirsch conjecture was recently disproved for higher dimensions, it still leaves the following questions open (M. Hassan, M. Rehmani, and J. Chen 2020)\cite{Chen2020} (P. Wang. 2019)\cite{Wang2019}.

Are there pivot rules which lead to polynomial-time simplex variants?

Do all polytopal graphs have polynomially bounded diameter?

These questions relate to the performance analysis and development of simplex-like methods.

\section{The Cone-cutting Theory and Gravity Sliding algorithm} \label{sec:The Cone-cutting Theory and GSA}
In a \textit{m}-dimension space $R^{m}$, a hyperplane $y^{T}\tau={c}$ cuts $R^{m}$ into two half spaces. Here $\tau$ is a normal vector of the hyperplane and $c$ is a constant. We denote the positive half space $\{y|y^{T}\tau\geq{c}\}$ the accept zone of the hyperplane and the negative half space where $\{y|y^{T}\tau<{c}\}$ its reject zone. We define a facet as follows:\\
\textbf{Definition 3.1} A facet $\alpha:(\tau,c)$ is a hyperplane where $\tau$ is a normal vector pointing to the accept zone of the half space and $c$ is a constant.

When there are $m$ facets in $R^{m}$ and $\{\tau_1,\tau_2,…,\tau_{m}\}$ are linear independent, this set of linear equations has a unique solution which is a point $V$ in $R^{m}$. Geometrically, $V$ is the vertex of the cone formed by facets $\{\alpha_1,\alpha_2,…,\alpha_{m}\}$ . We now give a formal definition of a cone:\\
\textbf{Definition 3.2} Given $m$ hyperplanes in $R^{m}$, with rank $r(\alpha_1,…,\alpha_{m})=m$ and intersection $V$,
$C=C(V;\alpha_1,…,\alpha_{m})=\alpha_1 \cap,…,\cap \alpha_{m}$ is called a cone in $R^{m}$. The area $\{y|y^{T}\tau_{i}\geq c_{i} (i=1,2,..,m)\}$ is called the accept zone of $C$. The point $V$ is the vertex and $\alpha_{j}$ is the facet plane, or simply the facet of $C$.

A cone $C$ also has $m$ edge lines. They are formed by the intersection of $(m-1)$ facets. Hence, a cone can also be defined as follows:\\
\textbf{Definition 3.2*} Given $m$ rays $R_{j}=\{V+tr_{j}|0\leq t <+\infty\}(j=1,…,m)$ shooting from a point $V$ with rank $r(r_1,...r_{m})=m, C=C(V;r_1,…,r_{m})=m, C=C(V;r_1,…,r_{m})=c[R_1,…,R{m}]$, the convex closure of $m$ rays is called a cone in $R^{m}$. $R_{j}$ is the edge, $r_{j}$ the edge direction, and  $R_{j}^{+}=\{V+tr_{j}|-\infty<t<+\infty\}$ the edge line of the cone $C$.

The two definitions are equivalent, Furthermore, P.Z. Wang has observed that $R_{i}^{+}$ and $\alpha_{i}$ are opposite to each other for $i=1,…,m$. Edge-line $R_{i}^{+}$ is the intersection of all $C$-facets except $\alpha_{i}$, while the feasible part of facet $\alpha_{i}$ is bounded by all $C$-edges except $R_{i}^{+}$. This is the duality between facets and edges. Moreover, $r_{j}^{T}\tau_{i}=0$ ($i \neq j$) since $r_{j}$ lies on $\alpha_{i}$; and\\
\renewcommand{\theequation}{\thesection.\arabic{equation}}
\setcounter{equation}{0}
\begin{equation}
r_{i}^{T}\tau_{i}\geq 0 \; (i=1,...,m)
\label{diseqn}
\end{equation}
\subsection{Cone-cutting and Simplex Tableau}\label{subsec:Cone Cutting and Simplex Tableau}
Consider a linear programming (LP) problem and its dual:\\
\begin{equation}
\begin{aligned}
	(Primary) : max\{\emph{c}^{\emph{T} }\emph{x} \;|\; \emph{A}\emph{x} \leq \emph{b}; x\geq 0\};\\
	(Dual) : min\{y^{T}b \; | \; \emph{y}^{T}A \geq c; y\geq 0\};
	\end{aligned}
	\label{diseqn}
\end{equation}

The standard simplex tableau can be obtained by appending an $m\times m$ identity matrix $I_{m\times m}$ which represents the slack variables:\\
\begin{equation}
\begin{bmatrix}\centering
\alpha_{11} & \cdots & \alpha_{1n} & 1 & \cdots &0& b_{1}\\
\vdots & \ddots & \vdots & \vdots &\ddots &\vdots &\vdots\\
\alpha_{m1} &\cdots& \alpha_{mn} &0& \cdots&1& b_{m}\\
c_{1} &\cdots& c_{n} &0& \cdots&0& 0\\
\end{bmatrix}
\end{equation}

Under the dual LP formulation, the simplex tableau can be represented as a matrix of $(n+m)$ column vectors; each column vector is a hyperplane of the form $y^{T}\tau_{j} \geq c_{j}$, where
\begin{equation}
\tau_{j}=\left\{
\begin{array}{lr}
A_{j},\;\;\;1\leq j\leq n \\
I_{j-n},\;\;\;n+1\leq j\leq n+m 
\end{array};
\right. 
c_{j}=\left\{
\begin{array}{lr}
 c_{j}, \;\;1\leq j\leq n\\
0,\;\;n+1\leq j\leq n+m
\end{array}
\right.
\label{sec3.4}
\end{equation}

Using the facet notation, the simplex tableau can then be represented as a matrix of facets $\alpha_{j} : (\tau_{j},c_{j}), j=1,...,n+m$ as shown in \ref{sec3.4}.
\begin{align}
\begin{matrix}
\alpha_1 & \alpha_2&\cdots& \alpha_{n+m} \\
[\tau_1&\tau_2 & \cdots&\tau_{n+m}]\\
\cline{1-4}
[c_1&c_2&\cdots&c_{n+m}]\\
\end{matrix}\end{align}

P.Z. Wang et al. have developed the cone-cutting theory (P. Wang. 2011)\cite{Wang2011} (P. Wang. 2014)\cite{Wang2014} to solve the dual LP problem. Amazingly, P.Z. Wang shows that this algorithm produces exactly the same result as the original simplex algorithm (G. B. Dantzig. 1963)\cite{Dantzig1963} when $b>0$. Hence, the cone-cutting theory offers a geometric interpretation of the simplex method. More significantly, it inspires the authors to explore new LP solutions.
\subsection{The Gravity Sliding algorithm} \label{subsec:Gravity Sliding algorithm}
Expanding on the cone-cutting theory, the Gravity Sliding Algorithm (P. Z. Wang 2017)\cite{Wang2017} and its variant, the Sliding Gradient Algorithm have been developed to find the optimal solution of the LP problem. This algorithm assumes that the dual feasible region $\mathcal{D}$ is bounded and non-empty. Recall that $\mathcal{D}$ is a convex polyhedron formed by constraints ($\emph{y}^{T}A \geq c; y\geq 0$) or ($y^{T}\tau_{j} \geq c_{j}; 1\leq j\leq n+m$), and the optimal feasible point is at one of its vertices. Let $\Omega=\{V_{i}|V_{i}^{T}\tau_{j} \geq c_{j}; j=1,...,n+m\}$ be the set of feasible vertices. The dual LP problem can then be stated as: $min \{V_{i}^{T}b | V_{i}\in \Omega \}$. Hence, the optimal vertex $V^*$ is the lowest vertex of $\mathcal{D}$ viewed from the direction of b. Thus, from a feasible point $P_{0}$ on or inside $\mathcal{D}$, we can set the gradient vector $g_{0}=-b$ as the general descend direction, and descend inside $\mathcal{D}$ to reach $V^*$. The descending path cannot penetrate $\mathcal{D}$ or else it would go to the infeasible region. In other words, the facets that form the dual feasible region $\mathcal{D}$ may block the descending path and they are called the blocking facets. The gradient descend vector needs to change direction when it is blocked by the constraint facets. The main idea of the Sliding Gradient Algorithm is to determine the new gradient direction $g$ when it encounters blocking facets and to find the next stage point along $g$. P.Z. Wang (P. Z. Wang. 2017)\cite{Wang2017} and Lui (Lui. 2018)\cite{WL2018} gives a detailed discussion on this subject.

\section{Linear Adjusting Programming} \label{sec:Linear Adjusting Programming}

\textbf{Definition 4.1} Given an objective vector \emph{g}=\emph{c} in a prime factor space $\textit{X}=\textit{R}^{n}$ and a group of constraint hyperplanes $\{\beta_{\emph{i}}:(\eta _{\emph{i}},\textsl{x})=\emph{b}_{\emph{i}}\}(\emph{i}= 1...\emph{m})$, where \emph{c}, \textit{x} stand for column vectors; $(\eta _{\emph{i}}, \textit{x})$ stands for the inner product of $\eta _{i}$ and $\textsl{x}.$ The prime linear adjusted programming is denoted as follows: 
\renewcommand{\theequation}{\thesection.\arabic{equation}}
\setcounter{equation}{0}
\begin{equation}
\begin{aligned}(AP):  Up \{(\emph{{c}}, \emph{x})\, |\, \emph{A}\emph{x}\leq \emph{b; x}\geq 0\}.\end{aligned}
\label{diseqn}
\end{equation}
which aims to get an upper value of $(\emph{c, x})$ in the constraint area $\cap \{\beta _\emph{i}\,|\,{\emph{i}} = 1...\emph{m}\}$, where $ \beta_i=\{\textsl{x}\,|\,(\eta _\emph{i},  \textsl{x} )\leq \emph{c} _\emph{i}\}.$\\
\textbf{Definition 4.2}\, Given an objective vector \emph{g=b} in the dual factor space $\textit{Y}=\textit{R}^{\emph{m}}$ and a group of constraint hyperplanes\, $\alpha_{j}=\{\textsl{y}\,|\,(\tau_{\emph{j}},\textsl{y})\geq \emph{c}_{\emph{j}}\} (\emph{j}= 1...\emph{n})$, where \emph{b}, \textsl{y} stand for row vectors, the dual linear adjusted programming is denoted as follows:  
\begin{equation}
\begin{aligned}(AD):  Lw \{(\emph{b}, \textsl{y}) \,|\, \textsl{y}\emph{A}\geq \emph{c; y}\geq 0\};\end{aligned}
\label{diseqn}
\end{equation}
which aims to get a lower value of \emph{(b, y)} under the constraints of $\cap \{ {\alpha _\emph{j}}\,|\,{\rm{\emph{j}}} = 1...\emph{n}\},$ where $\alpha _\emph{j}=\{\textsl{y}\,|\,(\tau _\emph{j}, \textsl{y} )\geq \emph{c} _\emph{j}\}$.

Linear adjustment programming is different from linear programming that the purpose of linear programming is to get the ultimate best target, while linear adjusting programming only makes a direct action in a short time. What resistance will come next? How does an agent orient himself? What further resistance may be encountered, and how should the direction be adjusted? If we can find the best, it is a blessing; if not, it is fine to find a local optimal solution. However, we request the next adjusting is given at least.

For convenience, we just state the linear adjusted programming problem in a dual form (AD).

There are two tasks that need to be overcome in linear adjusted programming: 1. Suppose that the position of the moving point is $\emph{y}=\emph{P}_\emph{t}$\;at time \emph{t}, given a direction \emph{d}, calculating the next stage point $\emph{P}_{\emph{t}+\emph{1}} $\, where one or several constraint hyperplanes block the moving line. 2. Calculating the projection of \emph{d} in a subspace.  

To perform the first task, we have the following proposition:
The next stage point can be calculated as follows:
\begin{equation}
\begin{split}
\textit{t}_\textit{j:}&=(\textit{c}_\textit{j}-(\tau_\textit{j},\textit{P}))/(\tau_\textit{j},\textit{d})\;\;(\textit{j} = 1,\ldots ,\textit{n}); \\\;\textit{j}^*:&=Argmin_\textit{j}\{ \textit{t}_\textit{j}\,|\,\textit{t}_\textit{j} > 0;\textit{j} = 1,...,\textit{n}\};\\ \textit{P}_{\textit{t}+1}:&=\textit{P}_\textit{t}+ \textit{t}_{\textit{j}^*}\textit{d}.
\end{split}
\label{equation4.3}
\end{equation}

The coordinate of a point on the ray moving ray starting from  $\emph{P}_{\emph{t}}$ is $\emph{y}=\emph{P}_{\emph{t}}+\emph{td} \,(\emph{t}>0)$, where \emph{t} is a parameter. If a constraint hyperplane ${{\alpha}_\emph{j}}:({\tau_\emph{j}},\emph{y}){\rm{ }} = {\emph{c}_\emph{j}}$, does not pass through the stage point $\emph{P}_\emph{t}$, then it meets the ray if and only if
\begin{equation}
\begin{aligned}
({\tau_\emph{j}},{\emph{P}_\emph{t}} + \emph{td})= {\emph{c}_\emph{j}}\;(\emph{t} > 0).
\end{aligned}
\label{diseqn}
\end{equation}

When$({\tau_\emph{j}},\emph{d}) \ne 0$, we can calculate the parameter \emph{t} for $\alpha_{\emph{j}}:$
\begin{equation}
\begin{aligned}
\emph{t}_\emph{j} = ({\emph{c}_\emph{j}} - ({\tau_\emph{j}},\emph{P}))/(\tau_\emph{j},\emph{d}).
\end{aligned}
\label{diseqn}
\end{equation}

Then select the first one who blocks the moving, let \emph{j}* be the index of it, we have that
\begin{equation}
\begin{aligned}
\textit{j}^* = Argmin_{\emph{j}}\{ \emph{t}_\emph{j}\;|\;{\emph{t}_\emph{j}} > 0;\emph{j} = 1,{\rm{ }}...,\emph{n}\} .
\end{aligned}
\label{diseqn}
\end{equation}
Therefore, \ref{equation4.3} holds.

\section{Hat Projection} \label{sec:Hat Projection}

Projection is an important concept in linear distance spaces. A projection is a mapping $\emph{P}:\emph{X}\rightarrow \emph{Y}$,\, where \emph{Y} is a subspace of linear distance space \emph{X}. For any point \emph{x} in \emph{X}, there is one and only one point \emph{p}(\emph{x}) in \emph{Y}, such that
\renewcommand{\theequation}{\thesection.\arabic{equation}}
\setcounter{equation}{0}
\begin{equation}
\begin{aligned}
(\forall \emph{y} \in \emph{Y})\;\emph{d}(\emph{x},\emph{p}\left( \emph{x} \right))) \le \emph{d}(\emph{x},\emph{y})
\end{aligned}
\label{quations4.1}
\end{equation}
where \emph{d} is a distance defined in \emph{X} (in subspace \emph{Y} also). Projection has following basic properties:
\begin{itemize}
	\item 
	\textbf{Idempotent Law:}
	\begin{equation}
	\begin{aligned}
	(\forall \emph{x} \in \emph{X})\;\emph{p}(\emph{p}(\emph{x})){\rm{ }} = \emph{p}(\emph{x});{\rm{ }}\;\end{aligned}
	\label{Law4.2}
	\end{equation}
	
	Projection obeys idempotent law, all image \emph{p}(\emph{x}) is its fixed points.
	\item 
	\textbf{Transitivity:}
	
	If $ \emph{Y}\subset \emph{A}\subset \emph{X}$, then\begin{equation}
	\begin{aligned}{ \downarrow ^\emph{A}}_\emph{Y}({ \downarrow _\emph{A}}\emph{X}){\rm{ }} = { \downarrow _\emph{Y}} \emph{X};{\rm{ }}\;\;\end{aligned}
	\label{diseqn}
	\end{equation}
	
	Projection relays in decrease subspaces. Where $\downarrow ^\emph{A}_\emph{Y} $ stands for projection $\emph{p}:{\rm{ }}\emph{A}\rightarrow \emph{Y}.\;{ \downarrow _\emph{Y}}$ stands for projection $\emph{P}:\emph{X}\rightarrow \emph{Y}$. We will use the symbol later.
	\item 
	\textbf{Complementarity:}
	
	Let $\emph{A}^ \bot $ be the complementary subspace ${\emph{A}^ \bot }\;$ in\, \emph{X}, then
	\begin{equation}
	\begin{aligned}
	(\forall \emph{x}\; \in \emph{X})\;\emph{x} = { \downarrow _\emph{Y}}\emph{x}\; + {\downarrow _\emph{Y}}_\bot \emph{x}\;
	\end{aligned}
	\label{equations4.4}
	\end{equation}
	
	Let $\sigma = {\rm{ }}\{ {\alpha_\emph{j}}:{\rm{ }}({\tau_\emph{j}},\emph{y}) = 0\,|\,\textit{j = 1,{\rm{ }} \ldots ,k}\}  = \{ {\alpha_{1,{\rm{ }} \ldots ,}}{\alpha_\textit{k}}\} $ be the set of planes blocking the way at a stage point with coefficient matrix $\textit{A}=\{ {\tau_1},{\rm{ }} \ldots ,{\tau_\textit{k}}\} .{\rm{ }}$ For the second task, we need to do projection calculation. Denote that
	\begin{equation}
	\begin{aligned}\emph{A} = \;\{ \emph{y} \,|\,({\tau_\emph{j}}, \emph{y}) = 0,{\rm{ }}\;\emph{j} = 1,{\rm{ }} \ldots ,\emph{k}\},
	\end{aligned}
	\label{diseqn}
	\end{equation}
	which is the subspace paralleling to the intersection of $\sigma$-blockers, the dimension of \textit{A} is \textit{m}-\textit{k}. It is obvious that the complementary subspace consists of all linear combinations of $\{ {\tau_1},\ldots ,{\tau_\textit{m}}\}, i.e.,\;\textit{A}{^ \bot } = \{ \textit{y}\,|\,\textit{y} = {\lambda _1}{\tau_1} +  \ldots  + {\lambda _\textit{m}}{\tau_\textit{m}}\} .$
	
	How does the calculation of the projection of a vector \textit{g} in a subspace work when \textit{k}$\textgreater$1?

If $\textit{A} = \textit{A}_{\textit{m}\times\textit{k}}$ is a full rank matrix with \textit{m}$\geq$\textit{k}, then it is obvious that $\textit{B}=\textit{A}^{\textit{T}}\textit{A}$ must be symmetric and reversible.
\end{itemize}
\textbf{Definition B}  Denote $\textit{H=AB}^{-1}\textit{A}^{\textit{T}}$, which is called a Hat matrix with respect to \textit{A}.
\begin{theorem}
	Hat matrix is projective, i.e., for any $\textit{y}\in{\textit{R}^\textit{m}}$, we have that $\textit{HHy=Hy}$.
\end{theorem}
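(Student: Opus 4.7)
The plan is to prove the idempotence $H^2 = H$ directly by expanding the matrix product, since that is exactly what the statement $HHy = Hy$ asserts for every $y \in \mathbb{R}^m$. Once $H^2 = H$ is established as a matrix identity, applying both sides to an arbitrary vector $y$ yields the theorem.

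First, I would recall the two hypotheses that were set up just before the statement: $A = A_{m\times k}$ has full column rank with $m \geq k$, and $B = A^T A$ is therefore symmetric and invertible (this is the standard Gram-matrix fact and was asserted in the paragraph preceding Definition~B). These two facts are what make the definition $H = A B^{-1} A^T$ well-posed and what will drive the proof.

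Next, I would compute $H^2$ by associating the factors as
\begin{equation*}
H \cdot H = (A B^{-1} A^T)(A B^{-1} A^T) = A B^{-1} (A^T A) B^{-1} A^T.
\end{equation*}
The middle factor $A^T A$ equals $B$ by definition, so $B^{-1}(A^T A) B^{-1} = B^{-1} B B^{-1} = B^{-1}$, and the whole product collapses to $A B^{-1} A^T = H$. Applying this identity to any $y \in \mathbb{R}^m$ gives $HHy = Hy$, which is exactly the claim.

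There is essentially no obstacle here: the only substantive point is that the expression $B^{-1}$ makes sense, which is guaranteed by the full-rank assumption on $A$. I would therefore make a brief explicit mention of this invertibility before performing the cancellation, so that the reader sees that the step $B^{-1} B = I_{k}$ is justified, and then close the proof. If one wanted to go slightly further in the same breath, one could note that $H$ is also symmetric ($H^T = (A B^{-1} A^T)^T = A (B^{-1})^T A^T = A B^{-1} A^T = H$, using the symmetry of $B$), so $H$ is in fact the orthogonal projector onto the column space of $A$; but this is not required by the statement as given.
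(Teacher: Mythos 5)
Your proposal is correct and matches the paper's own proof essentially verbatim: both expand $HH = AB^{-1}A^{T}AB^{-1}A^{T}$, replace the middle factor $A^{T}A$ by $B$, cancel $B^{-1}BB^{-1}$ to $B^{-1}$, and conclude $HHy=Hy$. Your extra remarks on the invertibility of $B$ and the symmetry of $H$ are sound but not part of the paper's argument.
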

\begin{proof}
	We have that $\emph{HHy} = \emph{A}{\emph{B}^{ - 1}}{\emph{A}^\emph{T}}\emph{A}{\emph{B}^{ - 1}}{\emph{A}^\emph{T}}\emph{y} = \emph{A}{\emph{B}^{ - 1}}\emph{B}{\emph{B}^{ - 1}}{\emph{A}^\emph{T}}\emph{y} = \emph{A}{\emph{B}^{ - 1}}{\emph{A}^\emph{T}}\emph{y} = \emph{Hy}.$
\end{proof}

As a transformation, a projective matrix is a mapping $\textit{H: X}\to \textit{Y}$, which satisfies the idempotent Law \ref{Law4.2}, and is also called the projective matrix. \textit{Y} is the image of \textit{X}, consists of all fixed points of \textit{H}, called the stable subspace with respect to \textit{H}. 
\begin{theorem} 
The stable subspace of Hat matrix $\textit{H}$ is $\textit{A}^{\bot}$, i.e., $\textit{Y}=\textit{A}^{\bot}.$
\end{theorem}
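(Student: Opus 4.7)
The plan is to establish the double inclusion $Y = A^{\bot}$ by exploiting the fact that, under the notation above, $A^{\bot}$ is precisely the column space of the matrix $A = [\tau_1 \mid \cdots \mid \tau_k]$, while $Y$ is by definition the set of fixed points of $H = A(A^T A)^{-1} A^T$. First I would make this identification explicit: since $A^{\bot} = \{\lambda_1 \tau_1 + \cdots + \lambda_k \tau_k \mid \lambda_i \in \mathbb{R}\}$, an element $y$ lies in $A^{\bot}$ if and only if there exists $z \in \mathbb{R}^k$ with $y = Az$.

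For the inclusion $A^{\bot} \subseteq Y$, I would take an arbitrary $y \in A^{\bot}$, write $y = Az$, and then compute directly
\begin{equation*}
Hy = A(A^T A)^{-1} A^T (Az) = A(A^T A)^{-1} (A^T A) z = Az = y,
\end{equation*}
so $y$ is a fixed point of $H$ and hence belongs to $Y$. This step uses only the definition of $H$ and the invertibility of $B = A^T A$, which was already established from the full-rank hypothesis on $A$.

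For the reverse inclusion $Y \subseteq A^{\bot}$, I would pick $y \in Y$, so that $Hy = y$ by the definition of the stable subspace, and then rewrite
\begin{equation*}
y = Hy = A \bigl[(A^T A)^{-1} A^T y\bigr].
\end{equation*}
The bracketed factor is some vector $z \in \mathbb{R}^k$, so $y = Az$ is exhibited as a linear combination of the columns $\tau_1, \ldots, \tau_k$ of $A$, i.e., $y \in A^{\bot}$. Combining the two inclusions gives $Y = A^{\bot}$.

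Conceptually there is no real obstacle here — the computation is a two-line consequence of the algebraic identity $A^T A \cdot (A^T A)^{-1} = I$ together with Theorem 1 (idempotence). The only hazard I anticipate is notational: the symbol $A$ is being used both for the affine subspace $\{y \mid (\tau_j, y) = 0\}$ and for the $m \times k$ matrix whose columns are the $\tau_j$'s, and the orthogonal-complement superscript $A^{\bot}$ refers to the former meaning. I would therefore be careful to open the proof by explicitly recording that, with these conventions, $A^{\bot} = \operatorname{col}(A)$, so that the verification proceeds without ambiguity.
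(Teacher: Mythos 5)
Your proof is correct, and the first half (showing $A^{\bot}\subseteq Y$ via $H(Az)=AB^{-1}A^{T}Az=AB^{-1}Bz=Az$) is essentially identical to the paper's, which verifies $HA=A$ column by column to conclude each $\tau_{j}$ is a fixed point. Where you genuinely diverge is the reverse inclusion $Y\subseteq A^{\bot}$: you observe that any fixed point $y=Hy=A\bigl[(A^{T}A)^{-1}A^{T}y\bigr]$ is exhibited directly as an element of $\operatorname{col}(A)=A^{\bot}$, i.e.\ $\mathrm{Fix}(H)\subseteq\mathrm{Im}(H)\subseteq\operatorname{col}(A)$. The paper instead takes an indirect route: it shows $Hx=0$ for every nonzero $x$ in the subspace $A=\{y\mid(\tau_{j},y)=0\}$, deduces $A\cap Y=\{0\}$, and then closes with a dimension count ($\dim A=m-k$ forces $\dim Y\le k=\dim A^{\bot}$, so the inclusion $A^{\bot}\subseteq Y$ must be an equality). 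Your argument is shorter and avoids the dimension bookkeeping, which in the paper's write-up is somewhat muddled (it asserts $A\cap Y=\emptyset$ where it means the intersection is trivial). What the paper's longer route buys is the auxiliary fact that $H$ annihilates the subspace $A$, which is exactly what is invoked in the subsequent theorem to identify $g\downarrow_{A^{\bot}}=Hg$ in the complementarity decomposition; if you adopted your proof you would want to record that fact separately. Your closing remark about the notational clash between the subspace $A$ and the matrix $A$ is well taken --- the paper conflates the two throughout, and making $A^{\bot}=\operatorname{col}(A)$ explicit at the outset is an improvement.
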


Set $\textit{C=HA}.$ For $\textit{j} = 1,{\rm{ }} \ldots ,{\rm{ }}\textit{m}$, we have that $\textit{C}{._\textit{j}} = {({\textit{H}_1}., \ldots ,\;{\textit{H}_\textit{m}}.)^\textit{T}}\textit{A}{._\textit{j}}\;$. \\Since $ \textit{HA=AB}^{-1}\textit{A}^{\textit{T}}\textit{A=AB}^{-1}\textit{B=A}$, i.e., $\textit{HA=A}$, we have that $\textit{C=A} $ and then $C{._j} = A{._j}$, so that $\textit{A}{._\textit{j}} = \textit{C}{._\textit{j}} = {({\textit{H}_1}., \ldots ,\;{\textit{H}_\textit{m}}.)^\textit{T}}\textit{A}{._\textit{j}} = \textit{HA}{._\textit{j}}$. Therefore, ${\tau_\textit{j}} = \textit{A}{._\textit{j}}$ is a fixed point of \textit{H}. It means that ${\textit{A}^ \bot } \subseteq \textit{Y}$.

If $0 \ne \textit{x} \in \textit{A},$ since that \,$(\textit{A}{._\textit{j}},\textit{x}){\rm{ }} = {\rm{ }}0$ holds for all \textit{j}=1, …, \textit{k}, we have that $\textit{A}^{\textit{T}}\textit{x}=0,$ and then $\textit{Hx=AB}^{-1}\textit{A}^{\textit{T}}\textit{x}=0$. It means that $\textit{x} \notin \textit{Y}$, so that $\textit{A} \cap \textit{Y}= \emptyset$. Since the dimension of \textit{A} is \textit{m-k}, the dimension of \textit{Y}. could not be larger than \textit{k}. Since the dimension of $\textit{A}{^\bot }$ is \textit{k}, while ${\textit{A}^ \bot } \cap \textit{Y}{\rm{ }} = \textit{A}{^ \bot }$, if $\textit{A}{^\bot } \ne \textit{Y}$, then the dimension of \textit{Y} must be larger than \textit{k}, this is a contradiction, so that $\textit{A}{^\bot } = \textit{Y}.$
\begin{theorem}The projection of \emph{g} in \textit{A} is that:
	\begin{equation}
	\begin{aligned}
	g\downarrow _{A}= g - Hg
	\end{aligned}
	\label{diseqn}
	\end{equation}
\end{theorem}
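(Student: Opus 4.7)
The plan is to recognize $Hg$ as the orthogonal projection of $g$ onto the complementary subspace $A^{\bot}$, and then read off $g \downarrow_A = g - Hg$ from the complementarity property \ref{equations4.4}. The decomposition $g = Hg + (g - Hg)$ is the natural candidate, and what remains is to verify that the two summands live in the two complementary subspaces.

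First I would appeal to Theorem 5.2, which identifies the stable subspace (image) of $H$ with $A^{\bot} = \mathrm{span}\{\tau_1, \ldots, \tau_k\}$. Since $H$ is idempotent (Theorem 5.1), $Hg$ is a fixed point of $H$ and therefore lies in $A^{\bot}$. Next I would check that the residual $g - Hg$ lies in $A$, i.e.\ that $(\tau_j, g - Hg) = 0$ for every $j = 1, \ldots, k$. Written in matrix form this amounts to $A^{T}(g - Hg) = 0$, which is the short calculation
\begin{equation}
A^{T} H g \;=\; A^{T} A\, B^{-1} A^{T} g \;=\; B\, B^{-1} A^{T} g \;=\; A^{T} g,
\end{equation}
so $A^{T}(g - Hg) = 0$ as required. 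This is really the only computational content of the theorem; once it is in hand, everything else is bookkeeping.

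Having established $Hg \in A^{\bot}$ and $g - Hg \in A$, the complementarity property \ref{equations4.4} guarantees that the decomposition of $g$ into an $A$-component and an $A^{\bot}$-component is unique. Matching terms, $g \downarrow_A = g - Hg$ and $g \downarrow_{A^{\bot}} = Hg$, which is the claim.

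The only place one has to be careful is the overloaded notation: the symbol $A$ is used both for the subspace $\{y : (\tau_j, y) = 0, \; j = 1, \ldots, k\}$ and for the matrix $[\tau_1, \ldots, \tau_k]$ built from its defining normals. These two objects are orthogonal complements of one another, which is exactly what makes the Hat matrix the right projector here. I would expect no real obstacle beyond making this identification explicit; the argument is otherwise a direct combination of Theorems 5.1 and 5.2 together with the one-line verification $A^{T}Hg = A^{T}g$.
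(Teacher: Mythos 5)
Your proposal is correct and follows essentially the same route as the paper: decompose $g$ via the complementarity property and identify $Hg$ with the $A^{\bot}$-component using the fact that $A^{\bot}$ is the stable subspace of $H$. You additionally verify that the residual lies in $A$ via the computation $A^{T}Hg = BB^{-1}A^{T}g = A^{T}g$, a step the paper's own proof silently skips even though it is needed to conclude that the idempotent $H$ is the \emph{orthogonal} projector onto $A^{\bot}$ rather than an arbitrary one; this makes your version slightly more complete than the original.
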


Since \textit{Y} is the direct sum of $\textit{A}{^ \bot }$  and \textit{A}, according to \ref{equations4.4}, for any $\textit{g} \in \textit{Y}$, we have that
\begin{equation}
\begin{aligned}\emph{g} = \emph{g}{ \downarrow _{\emph{A}{\rm{ }}\;}} + \;\emph{g}{ \downarrow _A}_ \bot 
\end{aligned}
\label{diseqn}
\end{equation}

Since $\textit{A}{^ \bot }$ is the stable subspace of \textit{H}, we have that $\textit{g}{ \downarrow _\textit{A}}_ \bot $=\textit{Hg}, \ref{quations4.1} is held.

So far, we have got the formula of projection calculation. Is it consistent with classical formula of projection shown in \ref{quations4.1} of the introduction? Yes! Suppose that $\textit{A} = \textit{A}_{\textit{n}\times{1}}= \tau = {({\tau_1},\ldots ,{\tau_\textit{m}})^\textit{T}}$, we have that $\textit{B} = {\textit{A}^\textit{T}}\textit{A} = (\tau,\tau)$, and ${\textit{B}^{ - 1}} = 1/(\tau,\tau)$, then $\;\textit{H} = \textit{A}{\textit{B}^{ - 1}}{\textit{A}^\textit{T}} = \textit{A}{\textit{A}^\textit{T}}/(\tau,\tau)$, where $\textit{AA}^{\textit{T}}=\textit{R}$ is a \textit{m}$\times$\textit{m} matrix with elements $\textit{r}_{\textit{ij}} = \tau_{\textit{i}}\tau_{\textit{j}}$. We have that:
\begin{equation}
\begin{split}
\textit{R}\textit{g}=(\tau_1(\tau,\textit{g}),\ldots ,\tau_\textit{m}(\tau,\emph{g}))^\textit{T}=(\tau_1, \ldots ,\tau_\textit{m})^\textit{T}(\tau,\emph{g})=(\tau,\emph{g})\tau;\\
\textit{Hg}=[\textit{A}\textit{A}^\textit{T}/(\tau,\tau)]\textit{g} =\textit{A}\textit{A}^\textit{T}\textit{g}\;/(\tau,\tau)=\textit{Rg}\;/(\tau,\tau)=\tau(\textit{g},\tau)/(\tau,\tau);\\
\textit{g}\downarrow_\alpha=\textit{g}-\textit{Hg=g}-\tau(\textit{g},\tau)/(\tau,\tau).
\end{split}
\end{equation}

It is coincident with \ref{AAA} of the introduction.

\section{Algorithm of Linear Adjusting Programming} \label{sec:Algorithm of Linear Adjusting Programing}

Suppose that the feasible region is not empty and a point  \emph{P}$_{0}$ in \textit{D} is given.
\renewcommand{\theequation}{\thesection.\arabic{equation}}
\setcounter{equation}{0}
\begin{equation}
\begin{aligned}
\textit{s}:=0; \textit{d}:=\textit{g}; \sigma:=empty.
\end{aligned}
\label{diseqn}
\end{equation}
\textbf{Step 1} Determination of the next stage point $\textit{P}_\textit{{s+1}} $ by means of \ref{equation3.3} of the linear adjusting programming;
\begin{equation}
\begin{aligned}
{\textit{t}_\textit{j}:{\rm{ }} = {[\textit{c}_\textit{j}} - ({\tau_\textit{j}},{\textit{P}_\textit{s}})]/({\tau_\textit{j}},\textit{d}){\rm{ }}\;(\textit{j} = 1, \ldots ,\textit{m + n};{\rm{ }}({\tau_\textit{j}},\textit{d}) \ne 0);}\\
{\textit{j}^* = Argmin_\textit{j}}\{ {\textit{t}_\textit{j}}\,|\;{\textit{t}_\textit{j}} > 0\};\\
{\textit{P}_{\textit{s} + 1}: = \textit{P}_\textit{s}} + {\textit{t}_{\textit{j}^*}}\textit{d}; Stop;\\
Or, \sigma: = \sigma + \{ {\alpha_\textit{j}}\,|\,\textit{j} =\textit{j}^*\}  = \{ {\alpha_{\;\left( 1 \right)}}, \ldots,\alpha_{\textit{k}}\} ; go\; to\; Step \;2.\\
\end{aligned}
\label{diseqn}
\end{equation}
\textbf{Step 2} Calculating Hat projection: 
\begin{equation}
\begin{aligned}
{\textit{A}: = \textit{s}\; = \{ {\alpha_{\;\left( 1 \right)}}, \ldots ,\;{\alpha_{\;(}}{{_\textit{k}}_)}\} ;}\\
{\textit{d}{ \downarrow _\textit{A}} = \textit{d} - \textit{Hd};}\\
{\textit{s}: = \textit{s + 1}; go\; back\; Step\; 1.}
\end{aligned}
\label{diseqn}
\end{equation}
\textbf{Example:}  \\
\textbf{Given (AD)}: $Lw \{ (\textit{b,y})|\textit{yA}\geq\textit{c};\textit{y}\geq 0\}$

Where constraint vectors are as follows: ${\tau_1}={\left({2,{\rm{ }}0,{\rm{ }}0,{\rm{ }}1,{\rm{ }}1} \right)^\textit{T}}$;${\textit{c}_1} = 1;{\tau_2} = {\left( { - 1,{\rm{ }}1,{\rm{ }}2,{\rm{ }}1,{\rm{ }}0} \right)^\textit{T}}$;\\${\textit{c}_2} = 1;{\tau_3} = {\left( {0,{\rm{ }}1,{\rm{ }}0,{\rm{ }}1,{\rm{ }}1} \right)^\textit{T}}$;${\textit{c}_3} = 2;{\tau_4} = {\left( { - 1,{\rm{ }}1,{\rm{ }}1,{\rm{ }}0,{\rm{ }}0} \right)^\textit{T}};{\textit{c}_4} = 3;{\tau_5} = {\left( {1,{\rm{ }}1,{\rm{ }}1,{\rm{ }}1,{\rm{ }}1} \right)^\textit{T}}$;${\textit{c}_5} = 4$;\\${\tau_6} = {\left( {1,{\rm{ }}0,{\rm{ }}0,{\rm{ }}0,{\rm{ }}0} \right)^\textit{T}}$;${\textit{c}_6} = 0$;${\tau_7} = {\left( {0,{\rm{ }}1,{\rm{ }}0,{\rm{ }}0,{\rm{ }}0} \right)^\textit{T}};{\textit{c}_7} = 0;{\tau_8} = {\left( {0,{\rm{ }}0,{\rm{ }}1,{\rm{ }}0,{\rm{ }}0} \right)^\textit{T}};{\textit{c}_8} = 0;{\tau_9} = {\left( {0,{\rm{ }}0,{\rm{ }}0,{\rm{ }}1,{\rm{ }}0} \right)^\textit{T}};\\{\textit{c}_9} = 0;{\tau_{10}} = {\left( {0,{\rm{ }}0,{\rm{ }}0,{\rm{ }}0,{\rm{ }}1} \right)^\textit{T}};{\textit{c}_{10}} = 0;\;$ The dual objective vector is $\textit{g}=-\textit{O}=(-4, -1, -4, -6, -2)$. And the current point $\textit{P}_{0} = (7, 4, 7, 6, 5)$, so let’s do linear adjusting programming in some steps. 
\begin{equation}
\begin{split}
\textit{s}: =0; \textit{d}: =\textit{g};  \textit{P}:=\textit{P}_{0}; \sigma:=empty;
\end{split}
\end{equation}
\textbf{Step 1} Calculating the next stage point:
\begin{equation}
\begin{split}
\textit{t}_{j}: = [\textit{c}_{j}- (\tau_{j},\textit{P}_{s})]/(\tau_{j},\textit{d})\,(\textit{j}=1,\ldots ,10);\\
\textit{j}^* = Argmin_{j}\{\textit{t}_{j}\;|\;\textit{t}_{j}>0\}= Argmin_{j}\{\textit{t}_{4} = 1,\;\textit{t}_{9} = 1\}  = \left\{ 4,9\right\};\\
\sigma: =\sigma + \left\{4,9\right\} = \left\{4,9\right\};\\
\textit{P}_{1}: = \textit{P}_{0} +\textit{t}_{j}^*\textit{d} = (7,4,7,6,5) + (-4,-1,-4,-6,-2) = (3,3,3,0,3).
\end{split}
\end{equation}
\textbf{Step 2} Calculating Hat projection of \textit{g} in the subspace of $\{\tau_{\textit{j}}\,|\, \textit{j} \in \sigma \}$ ;\\
\setlength{\arraycolsep}{2.0pt}
$\begin{array}{c}
\textit{B}\!=\!{\textit{A}^\textit{T}}\textit{A}\!=\!\left( 
{\begin{array}{*{20}{c}}
{ - 1}&1&1&0&0\\
0&0&0&1&0
\end{array}} \right)\left( {\begin{array}{*{20}{c}}
{ - 1}&0\\
1&0\\
1&0\\
0&1\\
0&0
\end{array}} \right)\!=\!\left( {\begin{array}{*{20}{c}}
3&0\\
0&1
\end{array}} \right)\;\;\;\;\;

{\textit{B}^{ - 1}} = \left( {\begin{array}{*{20}{c}}
{1/3}&0\\
0&1
\end{array}} \right)\\
\textit{H = A}{\textit{B}^{ - 1}}{\textit{A}^\textit{T}} = \left( {\begin{array}{*{20}{c}}
{ - 1}&0\\
1&0\\
1&0\\
0&1\\
0&0
\end{array}} \right)\left({\begin{array}{*{20}{c}}
{1/3}&0\\
0&1
\end{array}} \right)\left({\begin{array}{*{20}{c}}
{ - 1}&1&1&0&0\\
0&0&0&1&0
\end{array}} \right)= \left({\begin{array}{*{20}{c}}
{1/3}&{ - 1/3}&{ - 1/3}&0&0\\
{ - 1/3}&{1/3}&{1/3}&0&0\\
{ - 1/3}&{1/3}&{1/3}&0&0\\
0&0&0&1&0\\
0&0&0&0&0
\end{array}} \right)
\end{array}$
\begin{equation}
\begin{split}
\textit{d}:=\;\textit{g}\downarrow _\textit{A} = \textit{g} - \textit{Hg} =\left(-13,-2,-11,0,-6\right)/3;\\
\textit{s}:=\textit{s} + 1 = 1;go\; back\; Step\; 1.
\end{split}
\end{equation}
\textbf{Step 1} Determining next stage point:
\begin{equation}
\begin{split}
\textit{j}^* = Argmin_\textit{j}\{\textit{t}_\textit{j}= [\textit{c}_\textit{j}-(\tau_\textit{j},\textit{P})]/(\tau_\textit{j},\textit{d})\,|\,\textit{t}_\textit{j}>0=Argmin_\textit{j}\{ \textit{t}_{6} = 3/13\}= 6;\\
\textit{P}_{2}:=\left(3,3,3,0,3\right)+3/13\left(-13,-2,-11,0,-6 \right)=\left(0,33/13,6/13,0,21/13\right);\\
\sigma :=\;\sigma \; \cup \{\textit{j}\,|\,\textit{j} = \textit{j}^*\} = \left\{4,9,6\right\}.
\end{split}
\end{equation}
\textbf{Step 2} Calculating Hat projection:
\begin{center}
\setlength{\arraycolsep}{1.8pt}
$\begin{array}{c}
	\textit{B}\!=\! \textit{A}{\textit{A}^\textit{T}}\!=\!\left( {\begin{array}{*{20}{c}}
			{-1}&1&1&0&0\\
			0&0&0&1&0\\
			1&0&0&0&0
	\end{array}} \right)\left({\begin{array}{*{20}{c}}
			{-1}&0&1\\
			1&0&0\\
			1&0&0\\
			0&1&0\\
			0&0&0
	\end{array}} \right)\!=\! \left( {\begin{array}{*{20}{c}}
			3&0&{-1}\\
			0&1&0\\
			{-1}&0&1
	\end{array}} \right)
\end{array}$

\setlength{\arraycolsep}{1.8pt}
$\begin{array}{c}
	(\textit{B},\textit{I}) = \left( {\begin{array}{*{20}{c}}
			3&0&{-1}&1&0&0\\
			0&1&0&0&1&0\\
			{-1}&0&1&0&0&1
	\end{array}} \right) \to \left( {\begin{array}{*{20}{c}}
			1&0&{-1/3}&{1/3}&0&0\\
			0&1&0&0&1&0\\
			0&0&{2/3}&{1/3}&0&1
	\end{array}} \right)  \to \left( {\begin{array}{*{20}{c}}
			1&0&0&{1/2}&0&{1/2}\\
			0&1&0&0&1&0\\
			0&0&1&{1/2}&0&{3/2}
	\end{array}} \right) = (\textit{I},{\textit{B}^{ - 1}})
\end{array}$
$\begin{array}{c}
	{\textit{B}^{-1}} = \left( {\begin{array}{*{20}{c}}
			{1/2}&0&{1/2}\\
			0&1&0\\
			{1/2}&0&{3/2}
	\end{array}} \right)
\end{array}$

\setlength{\arraycolsep}{1.8pt}
$\begin{array}{c}
	\textit{H} =\textit{A}{\textit{B}^{ - 1}}{\textit{A}^{ - \textit{T}}} \!=\!\left({\begin{array}{*{20}{c}}
			{-1}&0&1\\
			1&0&0\\
			1&0&0\\
			0&1&0\\
			0&0&0
	\end{array}}\right)\left( {\begin{array}{*{20}{c}}
			{1/2}&0&{1/2}\\
			0&1&0\\
			{1/2}&0&{3/2}
	\end{array}} \right)\left( {\begin{array}{*{20}{c}}
			{-1}&1&1&0&0\\
			0&0&0&1&0\\
			1&0&0&0&0
	\end{array}} \right) = \left( {\begin{array}{*{20}{c}}
			1&0&0&0&0\\
			0&{1/2}&{1/2}&0&0\\
			0&{1/2}&{1/2}&0&0\\
			0&0&0&1&0\\
			0&0&0&0&0
	\end{array}} \right)
\end{array}$
\end{center}
(Where the inverse matrix is calculated by elimination method.)
\begin{equation}
\begin{aligned}
{\textit{d}:{\rm{ }} = \;\textit{g}{ \downarrow _\textit{A}} = \textit{g} - \textit{Hg} = {\rm{ }}\left( {0,{\rm{ }}3/2,{\rm{ }} - 3/2,{\rm{ }}0,{\rm{ }} - 2} \right);\;}\\
{\textit{s}:{\rm{ }} = \textit{s} + 1{\rm{ }} = {\rm{ }}2;go\; back\; Step\; 1.}
\end{aligned}
\label{diseqn}
\end{equation}
\textbf{Step 1} Determining the next stage point:
\begin{equation}
\begin{split}
\textit{j}^*=Argmin_\textit{j}\{\textit{t}_\textit{j}= [\textit{c}_\textit{j}- (\tau_\textit{j},\textit{P}_{2})]/(\tau_\textit{j},\textit{d})|\;\textit{t}_\textit{j}>0\}=Argmin_\textit{j}\{\textit{t}_{1} = \textit{t}_{5} = \textit{t}_{8} = \textit{t}_{9} = 4/13\}  = \{ 1,5,8,9\}\\
{P}_{3}:=\left(0,33/13,6/13,0,21/13\right)+4/13\left(0,3/2,- 3/2,0,-2\right)=\left(0,3,0,0,1\right)
\end{split}
\end{equation}

That is enough, so we stop here. Indeed we have got the dual optimal point of the corresponding LP problem: ${\textit{y}_1}^* = 0,{\textit{y}_2}^* = 3,\;{\textit{y}_3}^* = 0,\;{\textit{y}_4}^* = 0,\;{\textit{y}_5}^* = 1.$

The following are attempts of a linear programming example by using different algorithms. Three algorithms are to be used: Simplex Method, Ellipsoid Method and one of interior point methods. \\
\textbf{Description of the example:}

\emph{R} is used to denote the set of real numbers. \emph{A} is a \emph{m}$\times$\emph{n} matrix;\emph{b} is a  column vector in $\emph{R}^\emph{m}$; \emph{c} is a column vector in $\emph{R}^\emph{n}$.
\begin{equation}
\begin{aligned}
(Primal)\;\emph{max} \{\emph{c}^\emph{T}\emph{x}:\emph{Ax}\leq \emph{b},\emph{x}\geq 0\} ;\\
(Dual)\;\emph{min} \{\emph{y}^\emph{T}\emph{b}:\emph{y}^\emph{T}\emph{A}\geq \emph{c}^\emph{T},\emph{y}\geq 0\}; 
\end{aligned}
\label{diseqn}
\end{equation}

Equivalently, the dual problem is $\emph{min}\{\emph{y}^\emph{T}\emph{b}:\emph{y}^\emph{T}[\emph{A I}]\geq [\emph{c}^\emph{T}0^\emph{T}]\}$;\\
\textbf{Given information:} 
\begin{equation}
\begin{aligned}
\setlength{\arraycolsep}{1.0pt}
\emph{A}\!=\!\begin{bmatrix}
2 & -1 & 0 & -1&1 \\
0 & 1 & 1& 1&1 \\
0& 2& 0 &1&1\\
1 & 1 &  1& 0&1\\
1&0&1&0&1\\
\end{bmatrix};\emph{b}\!=\!\begin{bmatrix}
4\\
1\\
4\\
6\\
2\\
\end{bmatrix};\emph{c}\!=\!\begin{bmatrix}
1\\
1\\
2\\
3\\
4\\
\end{bmatrix};
\textit{P}_0\!=\!\begin{bmatrix}
7\\
4\\
7\\
6\\
5\\\end{bmatrix};
\end{aligned}
\label{diseqn}
\end{equation}
\textbf{Simplex Method (full tableau)}:

Consider the primal problem with slack variable  \emph{s} $\geq 0$ such that:
\begin{equation}
\begin{aligned}
(Primal) \stackrel{\textit{equivalent}}\longleftrightarrow \emph{min} \{-\emph{c}^\emph{T}\emph{x}:\emph{Ax}+\emph{Is}=\emph{b},\emph{x}\geq 0,\emph{s}\geq 0\} ;
\end{aligned}
\label{diseqn}
\end{equation}

Pivoting rule used: Bland’s rule. (always choose the smallest index when deciding entering and leaving index)

We assume the index order is:

$\emph{x}_1 < \emph{x}_2 <\emph{x}_3 < \emph{x}_4 < \emph{x}_5 < \emph{s}_1 < \emph{s}_2 < \emph{s}_3 < \emph{s}_4 < \emph{s}_5$.

Notice that $\emph{b}\geq 0$, a natural basic feasible solution is given by \emph{x}=0, \emph{s}=\emph{b}, so, we have the initial tableau:\\
\textbf{Initial tableau:}
\setlength{\tabcolsep}{1.8mm}
\begin{table}[H]
\centering
\begin{tabular}[l]{|l|l|l|l|l|l|l|l|l|l|l|l|}
	\hline
	&$  $ & $\emph{x}^*_{1}$ & $\emph{x}_{2} $&$\emph{x}_{3} $&$\emph{x}_{4}$ & $\emph{x}_{5}$ & $\emph{s}_{1}$ & $\emph{s}_{2}$ & $\emph{s}_{3}$ & $\emph{s}_{4}$& $\emph{s}_{5}$ \\ \hline
	& 0 & -1 & -1 & -2 & -3 & -4 & 0 & 0 & 0 & 0 & 0 \\ \hline
	$\emph{s}^*_1$& 4 & 2 & -1 & 0 & -1 & 1 & 1 & 0 & 0 & 0 & 0 \\ \hline
	$\emph{s}_2$& 1 & 0 & 1 & 1 & 1 & 1 & 0 & 1 & 0 & 0 & 0 \\ \hline
	$\emph{s}_3$& 4 & 0 & 2 & 0 & 1 & 1 & 0 & 0 & 1 & 0 & 0 \\ \hline
	$\emph{s}_4$& 6 & 1 & 1 & 1 & 0 & 1 & 0 & 0 & 0 & 1 & 0 \\ \hline
	$\emph{s}_5$& 2 & 1 & 0 & 1 & 0 & 1 & 0 & 0 & 0 & 0 & 1 \\ \hline
\end{tabular}
\caption{initial tableau}
\label{table1}
\end{table}

\textbf{step 1:}
\setlength{\tabcolsep}{1.6mm}
\begin{table}[H]		
\centering
\begin{tabular}[l]{|l|l|l|l|l|l|l|l|l|l|l|l|}
	\hline
	&  & $\emph{x}_1$ & $\emph{x}^*_2$ &$\emph{x}_3$ &$\emph{x}_4$ & $\emph{x}_5$ & $\emph{s}_1$ & $\emph{s}_2$ & $\emph{s}_3$ & $\emph{s}_4$ & $\emph{s}_5$ \\ \hline
	& 2 & 0 & -1.5 & -2 & -3.5 & -3.5 & 0.5 & 0 & 0 & 0 & 0 \\ \hline
	$\emph{x}_1$& 2 & 1 & -0.5 & 0 & -0.5 & 0.5 & 0.5 & 0 & 0 & 0 & 0 \\ \hline
	$\emph{s}_2$& 1 & 0 & 1 & 1 & 1 & 1 & 0 & 1 & 0 & 0 & 0 \\ \hline
	$\emph{s}_3$& 4 & 0 & 2 & 0 & 1 & 1 & 0 & 0 & 1 & 0 & 0 \\ \hline
	$\emph{s}_4$& 4 & 0 & 1.5 & 1 & 0.5 & 0.5 & -0.5 & 0 & 0 & 1 & 0 \\ \hline
	$\emph{s}^*_5$& 0 & 0 & 0.5 & 1 & 0.5 & 0.5 & -0.5 & 0 & 0 & 0 & 1 \\ \hline
\end{tabular}
\caption{step 1}
\label{table2}
\end{table}
\textbf{step 2:}
\begin{table}[H]	
\centering
\begin{tabular}{|l|l|l|l|l|l|l|l|l|l|l|l|}
	\hline
	&  & $\emph{x}_1$ & $\emph{x}_2$ &$\emph{x}_3$ &$\emph{x}^*_4$ & $\emph{x}_5$ & $\emph{s}_1$ & $\emph{s}_2$ & $\emph{s}_3$ & $\emph{s}_4$ & $\emph{s}_5$ \\ \hline
	& 2 & 0 & 0 & 1 & -2 & -2 & -1 & 0 & 0 & 0 & 3 \\ \hline
	$\emph{x}_1$& 2 & 1 & 0 & 1 & 0 & 1 & 0 & 0 & 0 & 0 & 1 \\ \hline
	$\emph{s}_2$& 1 & 0 & 0 & -1 & 0 & 0 & 1 & 1 & 0 & 0 & -2 \\ \hline
	$\emph{s}_3$&4 & 0 & 0 & -4 & -1 & -1 & 2 & 0 & 1 & 0 & -4 \\ \hline
	$\emph{s}_4$& 4 & 0 & 0 & -2 & -1 & -1 & 1 & 0 & 0 & 1 & -3 \\ \hline
	$\emph{x}^*_2$& 0 & 0 & 1 & 2 & 1 & 1 & -1 & 0 & 0 & 0 & 2 \\ \hline
\end{tabular}
\caption{step 2}
\label{table3}
\end{table}
\textbf{step 3:}
\begin{table}[H]	
\centering
\begin{tabular}{|l|l|l|l|l|l|l|l|l|l|l|l|}
	\hline
	&  & $\emph{x}_1$ & $\emph{x}_2$ &$\emph{x}_3$ &$\emph{x}_4$ & $\emph{x}_5$ & $\emph{s}^*_1$ & $\emph{s}_2$ & $\emph{s}_3$ & $\emph{s}_4$ & $\emph{s}_5$ \\ \hline
	& 2 & 0 & 2 & 5 & 0 & 0 & -3 & 0 & 0 & 0 & 7 \\ \hline
	$\emph{x}_1$& 2 & 1 & 0 & 1 & 0 & 1 & 0 & 0 & 0 & 0 & 1 \\ \hline
	$\emph{s}^*_2$& 1 & 0 & 0 & -1 & 0 & 0 & 1 & 1 & 0 & 0 & -2 \\ \hline
	$\emph{s}_3$&4 & 0 & 1 & -2 & 0 & 0 & 1 & 0 & 1 & 0 & -2 \\ \hline
	$\emph{s}_4$& 4 & 0 & 1 & 0 & 0 & 0 & 0 & 0 & 0 & 1 & -1 \\ \hline
	$\emph{x}_4$& 0 & 0 & 1 & 2 & 1 & 1 & -1 & 0 & 0 & 0 & 2 \\ \hline
\end{tabular}
\caption{step 3}
\label{table4}
\end{table}
\textbf{step 4:}
\begin{table}[H]	
\centering
\begin{tabular}{|l|l|l|l|l|l|l|l|l|l|l|l|}
	\hline
	&  & $\emph{x}_1$ & $\emph{x}_2$ &$\emph{x}_3$ &$\emph{x}_4$ & $\emph{x}_5$ & $\emph{s}_1$ & $\emph{s}_2$ & $\emph{s}_3$ & $\emph{s}_4$ & $\emph{s}_5$ \\ \hline
	& 5 & 0 & 2 & 2 & 0 & 0 & 0 & 3 & 0 & 0 & 1 \\ \hline
	$\emph{x}_1$& 2 & 1 & 0 & 1 & 0 & 1 & 0 & 0 & 0 & 0 & 1 \\ \hline
	$\emph{s}_2$ & 1 & 0 & 0 & -1 & 0 & 0 & 1 & 1 & 0 & 0 & -2 \\ \hline
	$\emph{s}_3$ & 3 & 0 & 1 & -1 & 0 & 0 & 0 & -1 & 1 & 0 & 0 \\ \hline
	$\emph{s}_4$  & 4 & 0 & 1 & 0 & 0 & 0 & 0 & 0 & 0 & 1 & -1 \\ \hline
	$\emph{x}_4$ & 1 & 0 & 1 & 1 & 1 & 1 & 0 & 1 & 0 & 0 & 0 \\ \hline
\end{tabular}
\caption{step  4}
\label{table4}
\end{table}
Conclusion: we have $\emph{x}^*=\begin{bmatrix}
2&
0&
0&
1&
0\end{bmatrix}^\textit{T}$ as an optimal solution for the primal problem with the objective value $\textit{c}^\textit{T}\emph{x}=5$. Note that we only have four steps in total. \\	
\textbf{Ellipsoid Method:}

Recall that
\begin{equation}
\begin{aligned}
(Primal)\;\emph{max} \{\emph{c}^\emph{T}\emph{x}:\emph{Ax}\leq \emph{b},\emph{x}\geq 0\} ;\\
(Dual)\;\emph{min} \{\emph{y}^\emph{T}\emph{b}:\emph{y}^\emph{T}\emph{A}\geq \emph{c}^\emph{T},\emph{y}\geq 0\}; 
\end{aligned}
\label{diseqn}
\end{equation}
\begin{equation}
\begin{aligned}
\emph{A}= \begin{bmatrix}
2 & -1 & 0 & -1&1 \\
0 & 1 & 1& 1&1 \\
0& 2& 0 &1&1\\
1 & 1 &  1& 0&1\\
1&0&1&0&1\\
\end{bmatrix};\emph{b}=\begin{bmatrix}
4\\
1\\
4\\
6\\
2\\
\end{bmatrix};\emph{c}=\begin{bmatrix}
1\\
1\\
2\\
3\\
4\\
\end{bmatrix};
\end{aligned}
\label{diseqn}
\end{equation}

Ellipsoid method is used to find a feasible solution $\begin{bmatrix}
\emph{x}^*\\
\emph{y}^*\\
\end{bmatrix}$ for the following feasible region $\mathcal{D}$:\\
\begin{equation}
\begin{aligned}
\emph{c}^\emph{T}\emph{x}=\emph{y}^\emph{T}\emph{b},\emph{Ax}\leq \emph{b},\emph{x}\geq 0,\emph{y}^\emph{T}\emph{A}\geq \emph{c}^\emph{T},\emph{y}\geq 0
\end{aligned}
\label{diseqn}
\end{equation}

By the dual theory of linear programming, $\emph{x}^*$ is an optimal solution for the primal problem and  $\emph{y}^*$ is an optimal solution for the dual problem.\\
To represent  as a polyhedron:\\
\begin{equation}
\begin{aligned}
\mathcal{D}=\{\emph{A}_0\emph{p}\geq\emph{b}_0\}, \text{where}\; \emph{A}_0=\begin{bmatrix}
-\emph{A}&0\\
0&\emph{A}^T\\
\emph{I}&0\\
0&\emph{I}\\
\emph{c}^{T} & -\emph{b}\\
-\emph{c}^T & \emph{b}^T
\end{bmatrix},\emph{p}=\begin{bmatrix}
x\\
y\\
\end{bmatrix},\emph{b}_0=\begin{bmatrix}
-\emph{b}\\
c\\
0\\
0\\
0\\
0\\
\end{bmatrix};
\end{aligned}
\label{diseqn}
\end{equation}

Since all the entries are integers, we set the upper bound of all entries’ absolute values \emph{U}=6 by scanning the entries of \emph{A}, \emph{b}, \emph{c}. 
The dimension of variable is \emph{n}=10.
Note that $\emph{c}^\emph{T}\emph{x}=\emph{y}^\emph{T}\emph{b}$ ensures that the volume of $\mathcal{D}$ must be zero. In other words, there is no lower bound of the volume of $\mathcal{D}$. Therefore \emph{b} should be perturbed in order to give a stopping criterion for the ellipsoid method. 
Perturbation step:

$\frac{1}{\epsilon}=2(\emph{n+1})((\emph{n+1})\emph{U})^{\emph{n+1}}$\,$\approx$\,2.277225151082475$\times$$10^{21}$, and $\emph{b}_{0}$ is perturbed to be $\emph{b}_0-\epsilon$\emph{e} where \emph{e} is the column vector of all ones.    $\frac{1}{\epsilon}$ is the common denominator of $\emph{A}_0$ and $\emph{b}_0$, so after multiply  $\frac{1}{\epsilon}$ to each entry of  $\emph{A}_0$ and $\emph{b}_0$, we have the upper bound of absolute value of all the entries as $\tilde{\emph{U}}$$\approx$1.3663335090649485$\times$$10^{22}$. The estimated number of iterations is $\emph{O}(\emph{n}^{4}log(\emph{n}))$. Both the values for entries and the estimated number of iterations needed which is approximately $5\times10^5$ are too big for human to calculate. 

Conclusion: for this example the ellipsoid method is apparently not as practical as simplex method and gravity sliding algorithm. \\
\textbf{An interior point method: short-step affine scaling algorithm}:\\
Consider the dual problem:
\begin{equation}
\begin{aligned}
\setlength{\arraycolsep}{1.8pt}
\begin{array}{*{20}{l}}
(Dual) \emph{min} \{\emph{y}^\emph{T}\emph{b}:\emph{y}^\emph{T}\emph{A}\geq \emph{c}^\emph{T},\emph{y}\geq 0\}\stackrel{\textit{equivalent}}\longleftrightarrow \emph{min} \{\emph{b}^\emph{T}\emph{x}:[\emph{A}^T-\emph{I}]
\begin{bmatrix}
\emph{y}\\
\emph{s}\\
\end{bmatrix}=\emph{c},\emph{y}\geq 0, \emph{s}\geq 0\} ; 
\end{array}
\end{aligned}
\label{diseqn}
\end{equation}
\setlength{\arraycolsep}{1.8pt}
\begin{equation}
\begin{aligned}
\emph{A}= \begin{bmatrix}
2 & -1 & 0 & -1&1 \\
0 & 1 & 1& 1&1 \\
0& 2& 0 &1&1\\
1 & 1 &  1& 0&1\\
1&0&1&0&1\\
\end{bmatrix};\emph{b}=\begin{bmatrix}
4\\
1\\
4\\
6\\
2\\
\end{bmatrix};\emph{c}=\begin{bmatrix}
1\\
1\\
2\\
3\\
4\\
\end{bmatrix};
\textit{P}_0=\begin{bmatrix}
7\\
4\\
7\\
6\\
5\\\end{bmatrix};\\
\end{aligned}
\label{diseqn}
\end{equation}
For initialization: 

Instead, we solve ${min}\{\emph{b}^\emph{T}\emph{y}+\emph{M}\emph{s}_6:[\emph{A}^T-\emph{I}]\begin{bmatrix}
\emph{y}\\
\emph{s}\\
\end{bmatrix}+(\emph{c}-[\emph{A}^T-\emph{I}]\emph{e})\emph{s}_6=\emph{c},\emph{y}\geq 0, \emph{s}\geq 0\} $, where \emph{M} is a large number and \emph{e} is a column vector with all components equal to one. The initial point for the affine scaling algorithm to start with is $\emph{y}_{\emph{j}}=1$ for $\emph{j}=1,2,...,5$ and $\emph{s}_{\emph{j}} =1$ for $\emph{j}=1,2,...,6$. 
Inputs:
\begin{equation}
\begin{split}
\setlength{\arraycolsep}{1.8pt}
A_0\!=\!\left[
\begin{array}{ccccccccccc}
2 & 0 & 0 & 1 & 1 & -1 & 0 & 0 & 0 & 0 & -2  \\ 
-1 & 1 & 2 & 1 & 0 & 0 & -1 & 0 & 0 & 0 & -1 \\ 
0 & 1 & 0 & 1 & 1 & 0 & 0 & -1 & 0 & 0 & 0 \\ 
-1 & 1 & 1 & 0 & 0 & 0 & 0 & 0 & -1 & 0 & 3 \\
1 & 1 & 1 & 1 & 1 & 0 & 0 & 0 & 0 & -1 & 0 \\ 
\end{array}
\right]
\end{split}
\end{equation}
\begin{equation}
\begin{aligned}
c_0=\left[
\begin{array}{ccccccccccc}
4&1&4&6&2&0&0&0&0&0&10^4
\end{array}
\right]^\emph{T};\text{choose $\emph{M}=10^4$;}
\end{aligned}
\label{diseqn}
\end{equation}
\begin{equation}
\begin{aligned}
\emph{x}^0=[
\begin{array}{ccccccccccc}
1&1&1&1&1&1&1&1&1&1&1
\end{array}]^{T}\geq 0; \epsilon=0.01;\;\text{choose}\;  \beta=0.997.
\end{aligned}
\label{diseqn}
\end{equation}
\textbf{Implementation} (by using MATLAB command window):
\begin{itemize}
\item Iteration 1:

$\emph{X}_0=\emph{diag}({\emph{x}^0});
\emph{P}^0=(\emph{A}_0\emph{X}^{2}_{0}\emph{A}^\emph{T}_{0})^{-1}\emph{A}_0\emph{X}^{2}_{0}\emph{c}_{0}=\begin{bmatrix}
	-0.28\\
	-1.51\\
	-0.14\\
	2.21\\
	0.64\\
\end{bmatrix}\times 10^3;\\
\emph{r}^0=\emph{c}_0-\emph{A}^\emph{T}_0\emph{P}^0=
\begin{array}{ccccccccccc}
0.62&
-1.21&
0.16&
1.29&
-0.22&
-0.28&
-1.51&
-0.14&
2.21&
0.64&
1.29
\end{array}]^{T}\times 10^3;$

Optimality check: ask $\emph{r}^0>0$ and $\emph{e}^T\emph{X}_0\emph{r}^0<\epsilon$ ? Ans: No.

Unboundedness check: ask $-\emph{X}^{2}_0\emph{r}^0 \geq 0$ ? Ans: No.

Update of solution:

${x}^{1}={x}^{0}-\beta \frac{\emph{X}^{2}_{0}r^0}{\left\|\emph{X}_0\emph{r}^0\right\|}=[\begin{array}{ccccccccccc}0.83 & 1.33 & 0.95 & 0.64&1.06&1.08&1.42&1.04&0.39&0.82 & 0.64\end{array}]^{T}$

Objective value: $\emph{c}^{\emph{T}}_0\emph{x}^1=6.43\times10^3$.
\item Iteration 2:

$\emph{X}_1=\emph{diag}({\emph{x}^1});
\emph{P}^1=(\emph{A}_0\emph{X}^{2}_{1}\emph{A}^\emph{T}_{0})^{-1}\emph{A}_0\emph{X}^{2}_{1}\emph{c}_{0}=\begin{bmatrix}
0.32\\
-1.16\\
-0.48\\
2.51\\
-0.09\\
\end{bmatrix}\times 10^3;\\
\emph{r}^1=\emph{c}_0-\emph{A}^\emph{T}_0\emph{P}^1=\begin{array}{ccccccccccc}
0.80&
-0.78&
-0.10&
1.42&
0.25&
0.32&
-1.16&
-0.48&
2.51&
-0.09&
1.95
\end{array}]^{T}\times 10^3;$

Optimality check: ask $\emph{r}^1>0$ and $\emph{e}^T\emph{X}_1\emph{r}^1<\epsilon$ ? Ans: No.

Unboundedness check: ask $-\emph{X}^{2}_1\emph{r}^1 \geq 0$ ? Ans: No.

Update of solution:

$\emph{x}^2=\emph{x}^1-\beta\frac{\emph{X}^{2}_1\emph{r}^1}{\left\|\emph{X}_1\emph{r}^1\right\|}=[\begin{array}{ccccccccccc}
0.63&
1.82&
0.99&
0.44&
0.96&
0.95&
2.24&
1.22&
0.25&
0.84&
0.36
\end{array}]^\emph{T};$

Objective value: $\emph{c}^{\emph{T}}_0\emph{x}^2=3.60\times10^3.$
\item Iteration 3:

$\emph{X}_2=\emph{diag}({\emph{x}^2});$
$\emph{P}^2=(\emph{A}_0\emph{X}^{2}_{2}\emph{A}^\emph{T}_{0})^{-1}\emph{A}_0\emph{X}^{2}_{2}\emph{c}_{0}=\begin{bmatrix}
264\\
-502\\
-354\\
1657\\
-435\\
\end{bmatrix};$\\
$\emph{r}^2=\emph{c}_0-\emph{A}^\emph{T}_0\emph{P}^2=[\begin{array}{ccccccccccc}
1.07&
-0.37&
-0.21&
1.03&
0.53&
0.26&
-0.50&
-0.35&
1.66&
-0.44&
5.05
\end{array}]^{T}\times 10^3;$

Optimality check: ask $\emph{r}^2>0$ and $\emph{e}^T\emph{X}_2\emph{r}^2<\epsilon$ ? Ans: No.

Unboundedness check: ask $-\emph{X}^{2}_2\emph{r}^2 \geq 0$ ? Ans: No.

Update of solution:\\
$\emph{x}^3=\emph{x}^2-\beta\frac{\emph{X}^{2}_2\emph{r}^2}{\left\|\emph{X}_2\emph{r}^2\right\|}=[\begin{array}{ccccccccccc}
0.47&
2.30&
1.07&
0.36&
0.77&
0.85&
3.22&
1.43&
0.21&
0.96&
0.10
\end{array}]^{T};$

Objective value: $\emph{c}^{\emph{T}}_0\emph{x}^3=1059.$
\item Iteration 4:

$\emph{X}_3=\emph{diag}({\emph{x}^3});
\emph{P}^3=(\emph{A}_0\emph{X}^{2}_{3}\emph{A}^\emph{T}_{0})^{-1}\emph{A}_0\emph{X}^{2}_{3}\emph{c}_{0}=\begin{bmatrix}
9.53\\
-40.16\\
-46.58\\
217.5\\
-92.2\\
\end{bmatrix};$

$\emph{r}^3=\emph{c}_0-\emph{A}^\emph{T}_0\emph{P}^3=[\begin{array}{ccccccccccc}
254&
-37.1&
-40.0&
175.9&
131.2&
9.5&
-40.7&
-46.6&
21.8&
-92.2&
9.32 \times 10^3
\end{array}]^{T};$

Optimality check: ask $\emph{r}^3>0$ and $\emph{e}^T\emph{X}_3\emph{r}^3<\epsilon$ ? Ans: No.

Unboundedness check: ask $-\emph{X}^{2}_3\emph{r}^3 \geq 0$ ? Ans: No.

Update of solution:

$\emph{x}^4=\emph{x}^3-\beta\frac{\emph{X}^{2}_3\emph{r}^3}{\left\|\emph{X}_3\emph{r}^3\right\|}=[\begin{array}{ccccccccccc}
0.41&
2.49&
1.11&
0.34&
0.69&
0.85&
3.64&
1.52&
0.20&
1.05&
0.0039
\end{array}]^{T};$

Objective value: $\emph{c}^{\emph{T}}_0\emph{x}^4=51.01.$
\item Iteration 5:

$\emph{X}_4=\emph{diag}({\emph{x}^4});
\emph{P}^4=(\emph{A}_0\emph{X}^{2}_{4}\emph{A}^\emph{T}_{0})^{-1}\emph{A}_0\emph{X}^{2}_{4}\emph{c}_{0}=\begin{bmatrix}
1.38\\
0.16\\
-0.69\\
-1.09\\
0.81\\
\end{bmatrix};$

$\emph{r}^4=\emph{c}_0-\emph{A}^\emph{T}_0\emph{P}^4=[\begin{array}{ccccccccccc}
1.67&
-0.37&
1.78&
4.33&
0.49&
1.38&
0.16&
-0.69&
1.09&
0.81&
9.99965 \times 10^3
\end{array}]^{T};$

Optimality check: ask $\emph{r}^4>0$ and $\emph{e}^T\emph{X}_4\emph{r}^4<\epsilon$ ? Ans: No.

Unboundedness check: ask $-\emph{X}^{2}_4\emph{r}^4 \geq 0$ ? Ans: No.

Update of solution:

$\emph{x}^5=\emph{x}^4-\beta\frac{\emph{X}^{2}_4\emph{r}^4}{\left\|\emph{X}_4\emph{r}^4\right\|}=[\begin{array}{ccccccccccc}
0.40&
2.55&
1.06&
0.32&
0.69&
0.82&
3.58&
1.56&
0.20&
1.02&
0.00003
\end{array}]^{T};$

Objective value: $\emph{c}^{\emph{T}}_0\emph{x}^5=11.98.$
\item Iteration 6:

$\emph{X}_5=\emph{diag}({\emph{x}^5});
\emph{P}^5=(\emph{A}_0\emph{X}^{2}_{5}\emph{A}^\emph{T}_{0})^{-1}\emph{A}_0\emph{X}^{2}_{5}\emph{c}_{0}=\begin{bmatrix}
1.39\\
0.20\\
-0.57\\
0.73\\
0.93\\
\end{bmatrix};$

$\emph{r}^5=\emph{c}_0-\emph{A}^\emph{T}_0\emph{P}^5=[\begin{array}{ccccccccccc}
1.22&
-0.30&
1.93&
4.04&
0.25&
1.39&
0.20&
-0.57&
0.73&
0.93&
1.0001 \times 10^4
\end{array}]^{T};$

Optimality check: ask $\emph{r}^5>0$ and $\emph{e}^T\emph{X}_5\emph{r}^5<\epsilon$ ? Ans: No.

Unboundedness check: ask $-\emph{X}^{2}_5\emph{r}^5 \geq 0$ ? Ans: No.

Update of solution:

$\emph{x}^6=\emph{x}^5-\beta\frac{\emph{X}^{2}_5\emph{r}^5}{\left\|\emph{X}_5\emph{r}^5\right\|}=[\begin{array}{ccccccccccc}
0.34&
3.15&
0.39&
0.19&
0.65&
0.53&
2.77&
1.99&
0.19&
0.72&
0.00002
\end{array}]^{T};$

Objective value: $\emph{c}^{\emph{T}}_0\emph{x}^6=8.77.$
\item Iteration 7:

$\emph{X}_6=\emph{diag}({\emph{x}^6});
\emph{P}^6=(\emph{A}_0\emph{X}^{2}_{6}\emph{A}^\emph{T}_{0})^{-1}\emph{A}_0\emph{X}^{2}_{6}\emph{c}_{0}=\begin{bmatrix}
1.32\\
0.076\\
-0.041\\
0.27\\
0.73\\
\end{bmatrix};$

$\emph{r}^6=\emph{c}_0-\emph{A}^\emph{T}_0\emph{P}^6=[\begin{array}{ccccccccccc}
0.97&
-0.03&
2.85&
3.92&
-0.0075&
1.32&
0.076&
-0.041&
-0.27&
0.73&
1.0002 \times 10^4
\end{array}]^{T};$

Optimality check: ask $\emph{r}^6>0$ and $\emph{e}^T\emph{X}_6\emph{r}^6<\epsilon$ ? Ans: No.

Unboundedness check: ask $-\emph{X}^{2}_6\emph{r}^6 \geq 0$ ? Ans: No.

Update of solution:

$\emph{x}^7=\emph{x}^6-\beta\frac{\emph{X}^{2}_6\emph{r}^6}{\left\|\emph{X}_6\emph{r}^6\right\|}=[\begin{array}{ccccccccccc}
0.97&
3.33&
0.13&
0.106&
0.654&
0.31&
2.42&
2.09&
0.19&
0.50&
0.00002
\end{array}]^{T};$

Objective value: $\emph{c}^{\emph{T}}_0\emph{x}^7=7.10.$
\item Iteration 8:

$\emph{X}_7=\emph{diag}({\emph{x}^7});
\emph{P}^7=(\emph{A}_0\emph{X}^{2}_{7}\emph{A}^\emph{T}_{0})^{-1}\emph{A}_0\emph{X}^{2}_{7}\emph{c}_{0}=\begin{bmatrix}
1.45\\
0.018\\
0.0080\\
0.356\\
0.616\\
\end{bmatrix};$

$\emph{r}^7=\emph{c}_0-\emph{A}^\emph{T}_0\emph{P}^7=[\begin{array}{ccccccccccc}
0.85&
0.0022&
2.99&
3.90&
-0.078&
1.45&
0.018&
0.0080&
1.45&
0.018&
1.0002 \times 10^4
\end{array}]^{T};$

Optimality check: ask $\emph{r}^7>0$ and $\emph{e}^T\emph{X}_7\emph{r}^7<\epsilon$ ? Ans: No.

Unboundedness check: ask $-\emph{X}^{2}_7\emph{r}^7 \geq 0$ ? Ans: No.

Update of solution:

$\emph{x}^8=\emph{x}^7-\beta\frac{\emph{X}^{2}_7\emph{r}^7}{\left\|\emph{X}_7\emph{r}^7\right\|}=[\begin{array}{ccccccccccc}
0.20&
3.30&
0.071&
0.055&
0.69&
0.15&
2.30&
2.05&
0.17&
0.32&
0.155 \times 10^4
\end{array}]^{T};$

Objective value: $\emph{c}^{\emph{T}}_0\emph{x}^8=6.25.$
\item Iteration 9:\\
$\emph{X}_8=\emph{diag}({\emph{x}^8});
\emph{P}^8=(\emph{A}_0\emph{X}^{2}_{8}\emph{A}^\emph{T}_{0})^{-1}\emph{A}_0\emph{X}^{2}_{8}\emph{c}_{0}=\begin{bmatrix}
1.60\\
0.0079\\
0.00038\\
0.5029\\
0.4858\\
\end{bmatrix};$

$\emph{r}^8=\emph{c}_0-\emph{A}^\emph{T}_0\emph{P}^8=[\begin{array}{ccccccccccc}
0.82&
0.0030&
3.00&
3.90&
-0.089&
0.160&
0.0079&
0.00038&
0.50&
0.49&
1.0002 \times 10^4
\end{array}]^{T};$

Optimality check: ask $\emph{r}^8>0$ and $\emph{e}^T\emph{X}_8\emph{r}^8<\epsilon$ ? Ans: No.

Unboundedness check: ask $-\emph{X}^{2}_8\emph{r}^8 \geq 0$ ? Ans: No.

Update of solution:

$\emph{x}^9=\emph{x}^8-\beta\frac{\emph{X}^{2}_8\emph{r}^8}{\left\|\emph{X}_8\emph{r}^8\right\|}=[\begin{array}{ccccccccccc}
0.132&
3.23&
0.040&
0.0306&
0.781&
0.075&
2.21&
2.04&
0.14&
0.217&
0.00001
\end{array}]^{T};$

Objective value: $\emph{c}^{\emph{T}}_0\emph{x}^9=5.77.$
\item Iteration 10.

$\emph{X}_9=\emph{diag}({\emph{x}^9});
\emph{P}^9=(\emph{A}_0\emph{X}^{2}_{9}\emph{A}^\emph{T}_{0})^{-1}\emph{A}_0\emph{X}^{2}_{9}\emph{c}_{0}=\begin{bmatrix}
1.70\\
0.004\\
-0.00007\\
0.56\\
0.43\\
\end{bmatrix};$

$\emph{r}^9=\emph{c}_0-\emph{A}^\emph{T}_0\emph{P}^9=[\begin{array}{ccccccccccc}
0.91&
0.0002&
3.00&
3.95&
-0.04&
1.61&
0.004&
-0.00007&
0.56&
0.43&
1.00015\times10^4
\end{array}];$

Optimality check: ask $\emph{r}^9>0$ and $\emph{e}^T\emph{X}_9\emph{r}^9<\epsilon$ ? Ans: No.

Unboundedness check: ask $-\emph{X}^{2}_9\emph{r}^9 \geq 0$ ? Ans: No.

Update of solution:\\
$\emph{x}^{10}=\emph{x}^9-\beta\frac{\emph{X}^{2}_9\emph{r}^9}{\left\|\emph{X}_9\emph{r}^9\right\|}=[\begin{array}{ccccccccccc}
0.078&
3.16&
0.024&
0.018&
0.87&
0.044&
2.14&
2.05&
0.10&
0.15&
0.68\times 10^{-5} 
\end{array}];$

Objective value: $\emph{c}^{\emph{T}}_0\emph{x}^{10}=5.48.$    \\
\textbf{[PAUSED]}
\end{itemize}
Conclusion: At the last iteration, the value for \emph{r} is almost nonnegative. By observing the values of $\emph{x}^{k}$ in each \emph{k-th} iteration, the convergence of the first five entries of $\emph{x}^{k}$ to $\emph{y}^*=[
\begin{array}{ccccc}
0&
3&
0&
0&
1
\end{array}]^\emph{T}$ and the convergence of objective values to the optimal value 5 can be seen easily. However, much more iterations are certainly needed for $\emph{x}^k$ to get close enough to the optimal value. 

The simplex method is the most convenient, but it has been proven that it is not a polynomial-time algorithm, and it will spin down on the boundary of the feasible region. The advantage is that the method is simple and the disadvantage is that it does not have eyes. Existing simple algorithms are difficult to ensure that they do not descend into the slow orbit of a circle. Our method will avoid this and follow the path of falling body. We can see every step of our method, and they can not see it by themselves. If our descending path passes through the vertex, and the next projection direction also passes through the vertex, then Cone-cutting algorithm is used, and Cone-cutting algorithm is the same as the simplex method. Hence, we can take advantage of its convenience, when we need to leave the vertex, go back to our method.
\section{Conclusions} \label{sec:Conclusions}
The model of linear adjusting programming focusses on the adjustment technique in artificial intelligence. The core problem is the projective calculation, which extends the gradient method to more wide areas. Linear adjusting programming is a relaxation of linear programming; the algorithm LAP, indeed, has realized the idea of gradient method in linear programming, which raises new possible solution for the searching of a strongly polynomial-time algorithm in LP. In the future, a combination of factor space, gravity sliding method, and gradient flow method, namely factor space in triple differential, might be finely developed as the last straw to crack the Smale’s problem with LP.

\end{document}